\theoremstyle{plain}
\newtheorem{theorem}{Theorem}
\newtheorem{lemma}[theorem]{Lemma}
\newtheorem*{main}{Main Theorem}
\DeclarePairedDelimiter\abs{\lvert}{\rvert}
\renewcommand{\ge}{\geqslant}
\renewcommand{\le}{\leqslant}
\title{Tomaszewski's problem on\\randomly signed sums, revisited}
\author{
Ravi B. Boppana\thanks{Department of Mathematics, Massachusetts Institute of Technology, Cambridge, Massachusetts, USA, {\tt rboppana@mit.edu}}
\and
Harrie Hendriks\thanks{Department of Mathematics, Radboud University Nijmegen, 
The Netherlands, {\tt H.Hendriks@science.ru.nl}}
\and
Martien C.A. van Zuijlen\thanks{Department of Mathematics, Radboud University Nijmegen, 
The Netherlands, {\tt M.vanZuijlen@science.ru.nl}}
}
\date{\small April 3, 2020}
\begin{document}

\maketitle

\begin{abstract}
Let $v_1$, $v_2$, $\ldots\,$, $v_n$ be real numbers whose squares add up to~$1$.
Consider the $2^n$ signed sums of the form $S = \sum \pm v_i$.
Boppana and Holzman (2017) proved that at least $\frac{13}{32}$ of these sums satisfy $\lvert S \rvert \le 1$.
Here we improve their bound to~$0.427685$.
\end{abstract}

\section{Introduction}

Let $v_1$, $v_2$, \dots, $v_n$ be real numbers such that the sum of their squares is at most~$1$.
Consider the $2^n$ signed sums of the form $S = \pm v_1 \pm v_2 \pm \dots \pm v_n$.
In 1986,
B.~Tomaszewski (see Guy~\cite{Guy}) asked the following question:
is it always true that at least $\frac{1}{2}$ of these sums
satisfy $\abs{S} \le 1$?

Boppana and Holzman~\cite{BH} proved that at least $\frac{13}{32} = 0.40625$ of the sums satisfy $\abs{S} \le 1$.
Actually, they proved a slightly better bound of~$0.406259$.  
See their paper for a discussion of earlier work on Tomaszewski's problem. 

In this note, 
we will improve the bound to~$0.427685$.
We will sharpen the Boppana-Holzman argument by using a Gaussian bound due to Bentkus and Dzindzalieta~\cite{BD}.  

We will use the language of probability.
Let $\Pr[A]$ be the probability of an event~$A$.
A \emph{random sign} is a random variable whose probability distribution is the uniform distribution on the set~$\{ -1, +1 \}$.
With this language, we can 
state our main result.

\begin{main}
Let $v_1$, $v_2$, \dots, $v_n$ be real numbers such that $\sum_{i=1}^n v_i^2 \le 1$.
Let $a_1$, $a_2$, \dots, $a_n$ be independent random signs.
Let $S$ be $\sum_{i = 1}^n a_i v_i$.
Then
$\Pr[ \abs{S} \le 1 ] > 0.427685.$
\end{main}

\section{Proof of the improved bound}

In this section, we will prove the bound of $0.427685$.
We will follow the approach of Boppana and Holzman~\cite{BH},
replacing their fourth-moment method with a Gaussian bound.  

Let $Q$ be the tail function of the standard normal (Gaussian) distribution:
\[
  Q(x) = \frac{1}{\sqrt{2 \pi}} \int_x^\infty e^{-t^2 / 2} \, dt.
\]
Note that $Q$ is a decreasing, positive function.  

Bentkus and Dzindzalieta~\cite{BD} proved the following Gaussian bound on randomly-signed sums. 
See their paper for a discussion of earlier work on such bounds.

\begin{theorem}[Bentkus and Dzindzalieta] \label{thm:BD}
Let $x$ be a real number.
Let $v_1$, $v_2$, \dots, $v_n$ be real numbers such that $\sum_{i=1}^n v_i^2 \le 1$.
Let $a_1$, $a_2$, \dots, $a_n$ be independent random signs.
Let $S$ be $\sum_{i = 1}^n a_i v_i$.
Then
\[
  \Pr[ S \ge x ] \le \frac{Q(x)}{4 Q(\sqrt{2})} \, .
\]	
\end{theorem}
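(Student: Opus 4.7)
The plan is to establish the tail bound by exploiting the specific extremal structure of randomly signed sums. The constant $1/(4Q(\sqrt{2}))$ is engineered so that equality holds at $x = \sqrt{2}$ for the two-atom configuration $v_1 = v_2 = 1/\sqrt{2}$ (with all other $v_i = 0$): a direct computation gives $\Pr[S \ge \sqrt{2}] = 1/4 = Q(\sqrt{2})/(4 Q(\sqrt{2}))$. So the task is to show that this configuration, at this particular $x$, is the worst case for the ratio $\Pr[S \ge x]/Q(x)$ over all admissible $(v_i)$ and all $x \in \mathbb{R}$.

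First I would make standard reductions. Whenever $Q(x) \ge 4 Q(\sqrt{2})$ (i.e., $x$ is small enough), the claimed bound is at least $1$ and therefore trivial. For larger $x$, by compactness, the supremum of $\Pr[S \ge x]$ over admissible $(v_i)$ is attained, and by a scaling/sign-flip argument I may assume $\sum v_i^2 = 1$ and all $v_i \ge 0$.

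Next I would apply an extremality argument. One natural route is a Chernoff-style majorant: choose a smooth or piecewise-linear function $\phi$ with $\phi(s) \ge \mathbf{1}_{s \ge x}$, calibrated so that equality holds at $s = \sqrt{2}$, and bound $\Pr[S \ge x] \le E[\phi(S)]$ using the product structure $E[\phi(\sum a_i v_i)] = \prod_i E_{a_i}[\cdots]$ arising from the independence of the signs. An alternative is an inductive ``peeling'' argument: condition on $a_n$, reduce the problem to $n-1$ variables with rescaled budget, and verify that the bound is preserved by this recursion. In either approach, the analysis should reduce to verifying the inequality on a small family of extremal configurations---in particular, the $k$-atom equal-weight configurations $v_1 = \dots = v_k = 1/\sqrt{k}$, for which $\Pr[S \ge \sqrt{k}] = 2^{-k}$, and the required inequality $2^{-k} \le Q(\sqrt{k})/(4 Q(\sqrt{2}))$ is strict for $k \ge 3$ and an equality at $k = 2$.

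The main obstacle is the extremality/majorant step. Rademacher tail probabilities are not monotone under simple operations on the weights (merging, splitting, equalizing), so identifying the right notion of ``extremal configuration'' is subtle; and the Gaussian tail function $Q$ has no closed form, so the continuous analogue of $2^{-k} \le Q(\sqrt{k})/(4 Q(\sqrt{2}))$---which must be verified not only at the discrete ``tight'' points but uniformly in $x$ and across all admissible configurations---requires delicate analytic estimates, especially near the critical scale $x \approx \sqrt{2}$ where the bound is saturated.
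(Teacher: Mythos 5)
There is a genuine gap here: what you have written is a plan for a proof, not a proof. You correctly identify the tight configuration ($n=2$, $v_1=v_2=1/\sqrt{2}$, $x=\sqrt{2}$, where $\Pr[S\ge \sqrt{2}]=\tfrac14$ saturates the bound) and the correct family of discrete checks $2^{-k}\le Q(\sqrt{k}\,)/(4Q(\sqrt{2}))$, but the entire content of the theorem lies in the ``extremality/majorant step'' that you explicitly leave as an unresolved obstacle. Moreover, one of your two proposed routes fails concretely as stated: the expectation $E[\phi(\sum_i a_i v_i)]$ does \emph{not} factor as a product $\prod_i E_{a_i}[\cdots]$ unless $\phi$ is an exponential $\phi(s)=e^{\lambda s}$, and exponential (Chernoff) majorants can never yield a bound of the form $c\,Q(x)$: optimizing over $\lambda$ gives at best $e^{-x^2/2}$, while $Q(x)\sim e^{-x^2/2}/(x\sqrt{2\pi}\,)$, so the claimed bound is smaller than anything Chernoff can reach by a factor of order $x$. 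That leaves only your inductive ``peeling'' alternative, and there the recursion does not simply ``preserve the bound'': after conditioning on one sign the remaining sum faces a shifted threshold $x\mp v_n$ with a reduced budget, and one must control a two-term average of Gaussian tails at unequal arguments --- this is precisely where all of the difficulty of the theorem is concentrated, and it is the step your proposal does not attempt.

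For calibration: the paper you are being compared against does not prove this statement at all; it is imported as a black box from Bentkus and Dzindzalieta \cite{BD}, and their proof of the optimal constant $1/(4Q(\sqrt{2}))$ is itself a full research paper, built on essentially the induction you sketch together with a delicate case analysis on the size of the largest weight and fine analytic properties of $Q$. Weaker constants (e.g., Pinelis's bound of roughly $4.46\,Q(x)$) can be obtained by moment-comparison majorants, but the optimal constant in the statement cannot; so your instinct that the induction is the viable route is sound, but the proposal stops exactly where the proof has to begin.
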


Given a positive number~$c$, 
define $F(c)$ by
\[
  F(c) = \frac{1}{2} - \frac{Q(1/\sqrt{c}\,)}{4 Q(\sqrt{2})} \, .
\]
Note that $F$ is a decreasing function bounded above by~$\frac{1}{2}$.
A calculation shows that $F(\frac{1}{4}) > 0.427685$.

We will need the following lemma, which quantitatively improves Lemma~3 of Boppana and Holzman~\cite{BH}.

\begin{lemma} \label{lemma:drift}
Let $c$ be a positive number.
Let $x$ be a real number such that $\abs{x} \le 1$.
Let $v_1$, $v_2$, \dots, $v_n$ be real numbers such that
\[
  \sum_{i = 1}^n v_i^2 \le c (1 + \abs{x})^2 .
\]
Let $a_1$, $a_2$, \dots, $a_n$ be independent random signs.
Let $Y$ be $\sum_{i=1}^n a_i v_i$.
Then
\[
  \Pr[ \abs{x + Y} \le 1 ] \ge F(c) .
\]
\end{lemma}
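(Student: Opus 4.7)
The plan is to express $\Pr[\abs{x+Y} \le 1]$ as $1$ minus two tail probabilities and bound each. Since $-Y$ has the same distribution as $Y$, we have $\Pr[\abs{x+Y} \le 1] = \Pr[\abs{(-x) + Y} \le 1]$, and the hypothesis on the $v_i$ is symmetric in $x \leftrightarrow -x$ as well, so I may assume $x \ge 0$. Under that assumption,
\[
  \Pr[\abs{x + Y} \le 1] = \Pr[-1-x \le Y \le 1-x] = 1 - \Pr[Y > 1-x] - \Pr[Y < -1-x],
\]
and the symmetry of $Y$ rewrites the left tail as $\Pr[Y < -1-x] = \Pr[Y > 1+x]$.

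For the first tail I intend to use only the crude bound $\Pr[Y > 1-x] \le \Pr[Y > 0] \le \frac{1}{2}$, which is valid because $1-x \ge 0$ (from $\abs{x} \le 1$) and because $\Pr[Y > 0] = \Pr[Y < 0]$ by the symmetry of $Y$.

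For the second tail, Theorem~\ref{thm:BD} enters. Set $M = \sqrt{c}\,(1+x)$; the hypothesis $\sum_i v_i^2 \le c(1+x)^2$ gives $\sum_i (v_i/M)^2 \le 1$, so applying Theorem~\ref{thm:BD} to the rescaled sum $Y/M$ at the threshold $(1+x)/M = 1/\sqrt{c}$ yields
\[
  \Pr[Y \ge 1+x] \le \frac{Q(1/\sqrt{c})}{4\,Q(\sqrt{2})}.
\]
Combining the two tail estimates, $\Pr[\abs{x+Y} \le 1] \ge \frac{1}{2} - Q(1/\sqrt{c})/(4 Q(\sqrt{2})) = F(c)$, as required.

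I do not anticipate a real obstacle. The whole point of the improvement over Boppana--Holzman's Lemma~3 is that the Gaussian bound of Bentkus--Dzindzalieta delivers a sharper estimate on $\Pr[Y \ge 1+x]$ than their fourth-moment argument did; the reduction to $x \ge 0$, the crude $\frac{1}{2}$ bound on $\Pr[Y > 1-x]$, and the rescaling by $M = \sqrt{c}(1+x)$ are all routine.
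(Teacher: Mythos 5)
Your proposal is correct and follows essentially the same route as the paper: reduce to $x \ge 0$ by symmetry, bound the near tail $\Pr[Y > 1-x]$ by $\tfrac12$ using the symmetry of $Y$, and bound the far tail by Theorem~\ref{thm:BD} after rescaling by $\sqrt{c}\,(1+x)$. The only cosmetic difference is that the paper absorbs the symmetry step into the rescaling by negating the $v_i$, while you apply it directly to convert $\Pr[Y < -1-x]$ into $\Pr[Y > 1+x]$.
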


\begin{proof}
By symmetry, we may assume that $x \ge 0$.
Let $w_i$ be $\frac{- v_i}{\sqrt{c} \, (1 + x)}$.
Then $\sum_{i = 1}^n w_i^2 \le 1$.
Let $S$ be $\sum_{i=1}^n a_i w_i$.
Then $Y = - \sqrt{c} \, (1 + x) S$.
Because $Y$ has a symmetric distribution, we have
\[
  \Pr[Y > 1 - x] \le \Pr[Y > 0] \le \frac{1}{2} \, .
\]
By the Bentkus-Dzindzalieta inequality (Theorem~\ref{thm:BD}), we have 
\[
  \Pr[Y < -(1 + x)] 
	  = \Pr\Bigl[S > \frac{1}{\sqrt{c}} \, \Bigr] 
		\le \frac{Q(1/\sqrt{c}\,)}{4 Q(\sqrt{2})} \, .
\]
Therefore
\[
  \Pr[ \abs{x + Y} > 1] 
	  = \Pr[Y > 1 - x] + \Pr[Y < -(1 + x)] 
		\le \frac{1}{2} + \frac{Q(1/\sqrt{c}\,)}{4 Q(\sqrt{2})} \, .
\]
Taking the complement, we obtain
\[
  \Pr[ \abs{x + Y} \le 1 ]
	  = 1 - \Pr[ \abs{x + Y} > 1]
	  \ge \frac{1}{2} - \frac{Q(1/\sqrt{c}\,)}{4 Q(\sqrt{2})}
		= F(c).  \qedhere
\]
\end{proof}

We will also need the following lemma,
which says that $F$ satisfies a certain weighted-average inequality.  

\begin{lemma} \label{lemma:average}
Let $K$ be an integer such that $K \ge 2$.  Then
\[
  \frac{1}{2^{K-1}} F\left( \frac{(K + 1)^2 - K}{(2K + 1)^2} \right)
	  + \Bigl(1 - \frac{1}{2^{K-1}} \Bigr) F\left( \frac{(K + 1)^2 - (K+2)}{(2K + 1)^2} \right)
		  \ge F\Bigl( \frac{1}{4} \Bigr) .
\]
\end{lemma}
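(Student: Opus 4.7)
The plan is to unpack $F$'s definition, reduce the claim to an inequality about the Gaussian tail $Q$, and then control the resulting inequality via envelope bounds on the Gaussian density. Writing $p = 1/2^{K-1}$, $c_1 = (K^2+K+1)/(2K+1)^2$, $c_2 = (K^2+K-1)/(2K+1)^2$, and $u_i = 1/\sqrt{c_i}$, I substitute $F(c) = \tfrac12 - Q(1/\sqrt{c})/(4Q(\sqrt{2}))$ into the claim. The $\tfrac12$ terms cancel, the positive factor $1/(4Q(\sqrt{2}))$ can be divided out, and using $1/\sqrt{1/4} = 2$ the inequality reduces to
\[
  p\,Q(u_1) + (1-p)\,Q(u_2) \le Q(2),
\]
where $u_1 = (2K+1)/\sqrt{K^2+K+1}$ and $u_2 = (2K+1)/\sqrt{K^2+K-1}$. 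Short algebra gives $u_1 < 2 < u_2$ for every $K \ge 2$, with $u_1, u_2 \to 2$ as $K \to \infty$.

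Writing $Q(u) - Q(2) = -\int_2^u \phi(t)\,dt$ with $\phi(t) = e^{-t^2/2}/\sqrt{2\pi}$, the reduced inequality becomes
\[
  (1-p)\int_2^{u_2} \phi(t)\,dt \;\ge\; p\int_{u_1}^2 \phi(t)\,dt.
\]
Since $\phi$ is decreasing on $[0,\infty)$, the left-hand integral is at least $(u_2 - 2)\,\phi(u_2)$ and the right-hand integral is at most $(2 - u_1)\,\phi(u_1)$, so it suffices to show
\[
  \frac{1}{2^{K-1}-1} \;\le\; \frac{u_2 - 2}{2 - u_1}\,e^{-(u_2^2 - u_1^2)/2}.
\]
Using the closed forms $2 - u_1 = 3/[\sqrt{K^2+K+1}\,(2\sqrt{K^2+K+1} + 2K+1)]$, $u_2 - 2 = 5/[\sqrt{K^2+K-1}\,(2K+1 + 2\sqrt{K^2+K-1})]$, and $u_2^2 - u_1^2 = 5/(K^2+K-1) + 3/(K^2+K+1)$, the right-hand side is an explicit function of $K$ tending to $5/3$ as $K \to \infty$, while the left-hand side decays exponentially. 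A uniform check over $K \ge 2$ then closes the proof.

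The main obstacle I anticipate is the case $K = 2$, where the right-hand side of the sufficient condition works out to only about $1.06$ while the left-hand side is $1$, leaving a mere $\approx 6\%$ of slack. For $K \ge 3$ the margin grows rapidly, so the delicate step is verifying $K = 2$: if the envelope bounds on $\phi$ over $[u_1, u_2]$ turn out to be too crude there, a sharper estimate (for example, a midpoint or trapezoidal approximation of the integrals, or a direct high-precision evaluation of $Q$ at the two points $5/\sqrt{7}$ and $\sqrt{5}$) will restore sufficient slack.
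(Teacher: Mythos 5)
Your reduction is correct and your route is genuinely different from the paper's. Substituting $F(c)=\frac12-Q(1/\sqrt{c}\,)/(4Q(\sqrt2))$ does reduce the claim to $p\,Q(u_1)+(1-p)\,Q(u_2)\le Q(2)$ with $p=1/2^{K-1}$ and $u_1=(2K+1)/\sqrt{K^2+K+1}<2<u_2=(2K+1)/\sqrt{K^2+K-1}$, and your rectangle bounds $\int_2^{u_2}\phi\ge(u_2-2)\phi(u_2)$ and $\int_{u_1}^{2}\phi\le(2-u_1)\phi(u_1)$ are valid since the density $\phi$ decreases on $[0,\infty)$; your closed forms for $2-u_1$, $u_2-2$, and $u_2^2-u_1^2$ all check out. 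The paper proceeds differently: it first discards the exact weights, noting that since $p\le\frac12$ and $F$ is decreasing (so $F(c_1)\le F(c_2)$), replacing the weights by $(\frac12,\frac12)$ only lowers the left side; it then writes $c_1=\frac14+\frac34\xi$ and $c_2=\frac14-\frac54\xi$ with $\xi=1/(2K+1)^2\in[0,1/25]$, proves $F$ is concave on $(0,1/3]$ by computing $\frac{d^2}{dx^2}Q(x^{-1/2})=-\frac14Q'(x^{-1/2})\,x^{-7/2}(1-3x)>0$ from the ODE $Q''=-xQ'$, and uses concavity in $\xi$ to reduce to the two endpoints $\xi=0$ (trivial) and $\xi=1/25$ (one numerical check). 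So both arguments bottom out in a single numerical verification at the extremal case $K=2$, but the paper's concavity step disposes of all $K$ simultaneously, whereas yours needs a short two-case finish: at $K=2$ the exact evaluation (your right-hand side is about $1.049$, not $1.06$, but still exceeds the left-hand side $1$, so the envelope bounds do suffice and no fallback is needed), and for $K\ge3$ a uniform lower bound such as $\frac{u_2-2}{2-u_1}\ge\frac53$ together with $e^{-(u_2^2-u_1^2)/2}\ge e^{-(5/11+3/13)/2}>0.7$, which makes the right-hand side exceed $1.18$ while the left-hand side is at most $\frac13$. The phrase ``a uniform check over $K\ge2$'' should be replaced by that explicit argument; with it, your proof is complete.
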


\begin{proof}
Let
\[
c_1=\frac{(K + 1)^2 - K}{(2K + 1)^2}=\frac14+\frac34\,\frac1{(2K+1)^2} 
\hbox{; }
c_2=\frac{(K + 1)^2 - (K+2)}{(2K + 1)^2}=\frac14-\frac54\,\frac1{(2K+1)^2}.
\]
Since $c_1\ge c_2$ and $F$ is a decreasing function, we see that for $K\ge2$ we have
\[
  \frac{1}{2^{K-1}} F(c_1)
	  + \left(1 - \frac{1}{2^{K-1}} \right) F(c_2)
		  \ge \frac12 F(c_1)+\frac12 F(c_2).
\]
Therefore it is sufficient to show that the following inequality holds for $0\le\xi\le 1/25$:
\begin{equation}\label{Ineq}
\frac12 F\left(\frac14+\frac34\xi\right)+\frac12 F\left(\frac14-\frac54\xi\right)\ge F\left(\frac14\right).
\end{equation}
Once we show that $F(x)$ is a concave function in the region $0 < x \le 1/4+3/100$,
we conclude that the left hand side of the inequality is also concave in $\xi$
in the region $0\le \xi\le 1/25$ and we need only check
the inequality for $\xi=0$ and for $\xi=1/25$.
We will show that $Q(1/\sqrt x \,)$ is convex in $x$ in the region $0< x \le 1/3$.
Recall that $Q$ satisfies the ordinary differential equation $Q''(x)=-xQ'(x)$
and that $Q'(x)<0$ for all $x$.
Thus, for $x>0$
\begin{align*}
\frac{d^2}{dx^2}Q(x^{-1/2})
&=
Q''(x^{-1/2})\Bigl(-\frac12 x^{-3/2}\Bigr)^2+Q'(x^{-1/2})\Bigl(\frac34x^{-5/2}\Bigr)
\\&=
-\frac14Q'(x^{-1/2})x^{-7/2}(1-3x),
\end{align*}
which is positive if $1-3x > 0$.
It follows that $Q(x^{-1/2})$ is convex in the region $0<x\le1/3$.
Therefore $F(x)$ is concave in the region $0 < x \le 1/3$.
Inequality (\ref{Ineq}) holds trivially for $\xi=0$, and one can check by calculation that
it also holds for $\xi=1/25$ (and even for $\xi=1/9$).
\end{proof}

Finally, we will use these two lemmas to prove our main theorem.

\begin{proof}[Proof of Main Theorem]
We will follow the proof of Theorem~4 of Boppana and Holzman~\cite{BH} nearly line for line. 
Their proof uses a different function~$F$.
Closely examining their proof,
we see that they use four properties of~$F$:
it is bounded above by~$\frac{1}{2}$,
satisfies their Lemma~3 (our Lemma~\ref{lemma:drift}),
is a nonincreasing function (on the set of positive numbers),
and satisfies the weighted-average inequality of Lemma~\ref{lemma:average}.
Our function~$F$ has those same four properties.
Hence we reach the same conclusion: $\Pr[ \abs{S} \le 1 ] \ge F( \frac{1}{4})$.
A calculation shows that $F(\frac{1}{4}) > 0.427685$.
\end{proof}

\section*{Acknowledgment}

The first author would like to thank Ron Holzman for fruitful discussions.  
This paper is the result of two independent discoveries of the same improved bound:
one by the first author and one by the second and third authors.

\end{document}